\documentclass[11pt]{amsart}
\usepackage{amssymb}
\usepackage{rotating}
\usepackage{epsfig}
\usepackage{amsmath}
\usepackage{a4wide}

\newcommand{\R}{\ensuremath{\mathbb{R}}}

\newcommand{\X}{\ensuremath{\mathbb{X}}}

\newcommand{\F}{\ensuremath{{\mathcal F}}}
\newcommand{\T}{\ensuremath{{\mathcal T}}}

\DeclareMathOperator{\freq}{freq}
\DeclareMathOperator{\vol}{vol}

\newtheorem{thm}{Theorem}[section]
\newtheorem{lemma}[thm]{Lemma}

\newtheorem{defi}[thm]{Definition}

\begin{document}

\title{About Substitution Tilings with Statistical Circular Symmetry}

\author{D. Frettl\"oh}
\address{   Fakult\"at f\"ur Mathematik, Bielefeld,  Germany} 
\email{dirk.frettloeh@math.uni-bielefeld.de}
\urladdr{http://www.math.uni-bielefeld.de/baake/frettloe}

\maketitle

\begin{abstract}
Two results about equidistribution of tile orientations in
primitive substitution tilings are stated, one for finitely many,
one for infinitely many orientations. Furthermore, consequences for
the associated diffraction spectra and the dynamical systems are
discussed.    
\end{abstract}

\section{Substitution tilings}
Several important models of quasicrystals are provided by
(decorations of) nonperiodic substitution tilings. In this paper, only
tilings of the plane are considered. In larger dimensions some aspects
become rather complicated, in particular, generalisations of Theorem
\ref{unifdistr}. A substitution (or tile-substitution, but we stick to
the shorter term here) is just a rule how to enlarge and dissect a
collection of tiles $P_1, \ldots , P_m$ into copies of $P_1, \ldots,
P_m$, as indicated in Figure \ref{imbal} (left) or Figure
\ref{substbsp}. Iterating this rule yields tilings of larger and
larger portions of the plane. However, the precise description of a
substitution tiling in $\R^2$ is usually given as follows.  

Let $P_1, \ldots, P_m$ be compact subsets of $\R^2$, such that each
$P_i$ equals the closure of its interior. These are the 
{\em  prototiles}, the pieces our tiling is built of. We refer to 
sets which are congruent to some prototile simply as {\em tiles}. 
(Sometimes two prototiles are allowed to be congruent, compare Figure
\ref{imbal}. Then we equip each one of them with an additional
attribute (colour, decoration) to distinct them. Whenever we speak of
'congruent' tiles in the sequel, this means 'congruent with respect to
possible decorations'.) A {\em tiling} (with prototiles $P_1, \ldots,
P_m$) is a collection of tiles (each one congruent to some $P_i$)
which covers $\R^2$ such that every $x \in \R^2$ is 
contained in the interior of at most one tile. Note, that each such
tiling $\T$ can be described by placements of the prototiles: 
\[ \T = \{ R_{\alpha_1} P_{i_1} + t_1, R_{\alpha_2} P_{i_2} + t_2, \ldots \}, \]
where $R_{\alpha_i}$ denotes a rotation through $\alpha_i$, and 
$t_i \in \R^2$ are translation vectors. For later purposes, we define
the {\em orientation} $\phi(T)$ of a tile: If $T = R_{\alpha} P_i +
t$, $\alpha \in [0, 2\pi[$, then $\phi(T)=\alpha$.  
 
Now, let $\lambda > 1$ and $\sigma$ be a map which maps each
prototile $P_i$ to a non-overlapping collection of tiles, each one
congruent to some prototile $P_i$. Furthermore, we require the
substitution $\sigma$ to be  {\em  selfsimilar} in the sequel. That
is, $\lambda P_i =  \bigcup_{T \in \sigma(P_i)} T$ for $1 \le i \le m$. 
By setting $\sigma(R P_i + t) = R \sigma(P_i) + \lambda t$, the
substitution $\sigma$ extends naturally to any tile congruent to some
prototile. In a similar way, by $\sigma(\{T_1, T_2, \ldots \}) =
\sigma(T_1) \cup \sigma (T_2) \cup \ldots$, the substitution $\sigma$
extends to all (finite or infinite) collections of tiles. In
particular, one can iterate $\sigma$ on the prototiles to obtain the
$k$-th order {\em supertiles} $\sigma^k(P_i)$. 

\begin{defi} \label{defsubst}
Let $\sigma$ be a tile-substitution with prototiles $P_1, \ldots,
P_m$. A tiling $\T$ is called {\em substitution tiling} (with
substitution $\sigma$) if for each finite subset $\F \subset \T$
there are $i,k$ such that a copy of $\F$ is contained in some
supertile $\sigma^k(T_i)$. The family of all substitution tilings with
substitution $\sigma$ is denoted by $\X_{\sigma}$. The matrix
$M_{\sigma}$ with $(M_{\sigma})_{ij} = \# \{ T \in \sigma(P_j) \, : \,
T = R  P_i + t \}$ is called {\em substitution matrix}. A substitution
is {\em primitive}, if there is $k \ge 1$ such that $M^k_{\sigma}$ is
strictly positive.    
\end{defi}

Here, $\# A$ denotes the cardinality of the set $A$. 
This definition of a substitution tiling fits well into several
contexts, for instance, to the {\em hull} of a tiling, see Section
\ref{dynsys}.

\section{Statistical circular symmetry} \label{sec2}

Most of the well-known substitution tilings have the property that 
all prototiles occur in only finitely many orientations. For instance,
the Penrose dart and kite tiling has just two prototiles, a convex and a 
non-convex quadrangle. Each one occurs in one of 10 different 
orientations throughout the tiling. 
There are also substitution tilings where the tiles occur in infinitely
many orientations. The most prominent examples are certainly the
pinwheel tilings \cite{rad}. But there are many other examples,
compare \cite{f}, \cite{fh}. Two examples are shown in Figure
\ref{substbsp}.  

It was known for the pinwheel tiling that there are not only
infinitely many orientations of the tiles, but that they are
furthermore equidistributed \cite{rad}, \cite{mps}. Intuitively, this
means that each orientation occurs with the same frequency throughout
the tiling. In order to make this precise, we need the following
definition. Recall that a sequence $(a_j)_{j \ge 0}$ is 
{\em  equidistributed} in $[0, 2\pi[$, if for all $0 \le x < y < 2\pi$
holds: 
\[ \lim_{n \to \infty} \frac{1}{n} \sum_{j=1}^{n} 
  1_{[x, y]}(a_j)  = \frac{x - y}{2 \pi}. \]
Since the above sum is not absolutely convergent,
it depends strongly on the ordering of the sequence. Thus we need to
order the tiles in the tiling in some not-too-weird way. This is done
in the following definition. 

\begin{defi}
Let $\T = \{ T_1, T_2, \ldots \}$ be a primitive
substitution tiling. Let the numbering be such that the sequence
$(T_j)^{}_{j \ge 1}$ satisfies the following property: for all 
$n \ge 1$, there is some $\ell \ge n$ such that the patch 
$\{ T_1 \ldots , T_{\ell} \}$ is congruent to some supertile
$\sigma^k(P_i)$ for some prototile $P_i$, $k \ge 1$. The tiling
$\T^{}_{\sigma}$ has {\em statistical circular symmetry}, if, for all
$0 \le x < y < 2 \pi$, one has: 
\[ \lim_{r \to \infty} \frac{1}{n} \sum_{j=1}^{n} 
  1_{[x, y]}(\phi(T_j))  = \frac{x - y}{2 \pi}. \] 
\end{defi}
The ordering in the definition is compatible with other natural
notions, for instance, ordering the tiles with respect to their distance
to the origin. The following result was obtained in \cite{f}. Only while
preparing the present text, the author realized that a similar result
already appeared in \cite{rad}; see also the comments in Section
\ref{dynsys}.  

\begin{thm} \label{unifdistr}
Let $\T$ be a primitive substitution tiling in $\R^2$, where
copies of some prototile occur in infinitely many orientations. 
Then $\T$ is of statistical circular symmetry. 
\end{thm}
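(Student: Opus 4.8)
The plan is to combine Weyl's equidistribution criterion with a Perron--Frobenius/Wielandt analysis of how tile orientations propagate under $\sigma$. Fix the ordered sequence $(T_j)^{}_{j\ge1}$ and fix $n\in\Z\setminus\{0\}$; by Weyl's criterion it suffices to prove $\frac1N\sum_{j=1}^N e^{in\phi(T_j)}\to0$ as $N\to\infty$. By the defining property of the ordering, the prefixes $\{T_1,\dots,T_\ell\}$ are congruent to supertiles $\sigma^{k}(P_i)$ for arbitrarily large $\ell$, hence for arbitrarily large $k$ (a $k$-th order supertile contains $\Theta(\lambda^{2k})$ tiles). Using that the ordering is compatible with the radial ordering and that $\R^2$ is tiled by $\T$'s $k$-th order supertiles (recognizability), which have diameter $O(\lambda^k)$ and tile count $O(\lambda^{2k})$, one sees that every $N$ lies within $O(\lambda^{2k})$ of such an index $\ell$ with $k\to\infty$ as $N\to\infty$. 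Splitting $\sum_{j\le N}$ at $\ell$ and bounding the leftover block trivially by its length, the claim reduces to
\begin{equation}\tag{$\ast$}
 \varepsilon_k:=\max_{1\le i\le m}\ \frac{1}{\#\sigma^k(P_i)}\,\Bigl|\sum_{T\in\sigma^k(P_i)}e^{in\phi(T)}\Bigr|\ \longrightarrow\ 0\qquad(k\to\infty).
\end{equation}

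To prove $(\ast)$, encode orientations by exponential sums. Place each prototile in a fixed reference position, so $\phi(P_i)=0$, and let $Q=Q(n)$ be the $m\times m$ matrix with $Q_{j\ell}=\sum_{T\in\sigma(P_\ell),\,T\cong P_j}e^{in\phi(T)}$. Then $Q(0)=M_\sigma=:M$ and $|Q_{j\ell}|\le M_{j\ell}$ entrywise. Since $\sigma(R_\alpha P_\ell+t)=R_\alpha\sigma(P_\ell)+\lambda t$, orientations add across one substitution step, and an easy induction on $k$ gives $\sum_{T\in\sigma^k(P_i),\,T\cong P_j}e^{in\phi(T)}=(Q^k)_{ji}$ (up to a transpose, irrelevant here). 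As $M$ is primitive, $\#\sigma^k(P_i)=\sum_j(M^k)_{ji}\asymp\mu^k$, where $\mu$ is the Perron--Frobenius eigenvalue of $M$ (in fact $\mu=\lambda^2$ by a volume count). Hence $(\ast)$ follows as soon as the spectral radius of $Q(n)$ is strictly less than $\mu$ for every $n\neq0$.

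This spectral gap is the heart of the matter, and it is exactly where the hypothesis enters. By Wielandt's theorem for nonnegative matrices, $M$ being irreducible and $|Q(n)|\le M$ force the spectral radius of $Q(n)$ to be at most $\mu$, with equality only if $Q(n)=e^{i\psi}D M D^{-1}$ for some $\psi\in\R$ and some diagonal $D=\operatorname{diag}(e^{i\theta_1},\dots,e^{i\theta_m})$. Assume this equality holds. Then $|Q(n)_{j\ell}|=M_{j\ell}$, so the $M_{j\ell}$ unit summands making up $Q(n)_{j\ell}$ must all coincide; thus every tile of type $j$ in $\sigma(P_\ell)$ has $e^{in\phi(T)}=e^{i(\psi+\theta_j-\theta_\ell)}$, i.e.\ $n\phi(T)\equiv\psi+\theta_j-\theta_\ell\pmod{2\pi}$. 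Iterating (orientations add), every tile of type $j$ in $\sigma^k(P_i)$ satisfies $n\phi(T)\equiv k\psi+\theta_j-\theta_i\pmod{2\pi}$, so any two tiles lying in one common supertile $\sigma^k(P_i)$ have orientation difference in the finite set $\bigl\{\tfrac1n(\theta_j-\theta_{j'})+\tfrac{2\pi}{n}\Z:\ 1\le j,j'\le m\bigr\}$. By Definition \ref{defsubst} every pair of tiles of $\T$ lies, as a two-element patch, inside a congruent copy of some supertile; hence $\T$ has only finitely many distinct tile orientations, contradicting the assumption that copies of some prototile occur in infinitely many orientations. Therefore the spectral radius of $Q(n)$ is $<\mu$ for all $n\neq0$, which yields $(\ast)$ and completes the proof.

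I expect the spectral estimate above to be routine once Wielandt's theorem is invoked, and the translation of its equality case into ``finitely many orientations'' to be the decisive idea. The genuine obstacle to a fully rigorous write-up lies instead in the first paragraph: one must carefully justify that equidistribution of the prescribed ordered sequence really does follow from $(\ast)$, which relies on the geometric facts that $k$-th order supertiles have bounded eccentricity, diameter $O(\lambda^k)$ and tile count $\Theta(\lambda^{2k})$, together with recognizability and the compatibility of the chosen enumeration with the radial one.
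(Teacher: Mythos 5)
First, a point of reference: this announcement does not actually prove Theorem \ref{unifdistr}; it quotes it from \cite{f} (with a related result in \cite{rad}), so your attempt can only be compared with the method of the cited detailed paper. Your central mechanism --- Weyl's criterion, the matrices $Q(n)$ of exponential sums with $|Q(n)|\le M_\sigma$ entrywise, Wielandt's equality case, and the translation of that equality case into ``all orientation differences within supertiles lie in a finite set'', hence finitely many orientations via Definition \ref{defsubst} --- is correct and is essentially the route of the reference: sums of $e^{in\phi(T)}$ over supertiles are governed by a substitution-matrix cocycle dominated entrywise by $M_\sigma$, and the hypothesis of infinitely many orientations is precisely what excludes the degenerate case and yields a spectral radius strictly below the Perron--Frobenius eigenvalue $\lambda^2$. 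That part of your argument, including the induction $n\phi(T)\equiv k\psi+\theta_j-\theta_i$ and the contradiction, I regard as sound.

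The genuine gap is the one you flag yourself, and it is not just a matter of polish: the passage from $(\ast)$ to equidistribution of the ordered sequence $(\phi(T_j))_{j\ge1}$ does not work as described. Consecutive supertile prefixes $\ell_r<\ell_{r+1}$ satisfy $\ell_{r+1}/\ell_r\ge c>1$ (for the pinwheel, roughly $5$), so for $N$ strictly between them the leftover block has length comparable to $N$; bounding it ``trivially by its length'' leaves an error bounded away from $0$, and ``every $N$ lies within $O(\lambda^{2k})$ of a prefix'' is no help because $\ell_r$ is itself of order $\lambda^{2k}$. In fact, with the definition read literally (an arbitrary enumeration subject only to infinitely many supertile prefixes), the limit over all $N$ can fail: between two prefixes one could list the newly added tiles sorted by orientation, ruining equidistribution at intermediate $N$ while keeping every prefix sum small. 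What is needed --- and what the paper's remark about compatibility with ordering by distance to the origin is implicitly supplying --- is a stronger use of $(\ast)$: since the modulus of the Weyl sum is rotation invariant, the sum over \emph{every} congruent copy of a $k$-th order supertile is at most $\varepsilon_k$ times its tile count; one then decomposes $\{T_1,\dots,T_N\}$, essentially $\T\cap B_R$ with $R\asymp N^{1/2}$, into complete $k$-th order supertiles plus a boundary zone of width $O(\lambda^k)$ containing $o(N)$ tiles, choosing $k\to\infty$ slowly with $N$. With that decomposition your spectral estimate finishes the proof; without it (or without restricting the limit to the prefix subsequence, for which your argument is already complete), the first paragraph is not yet a proof.
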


\begin{figure}
\begin{center}
\epsfig{file=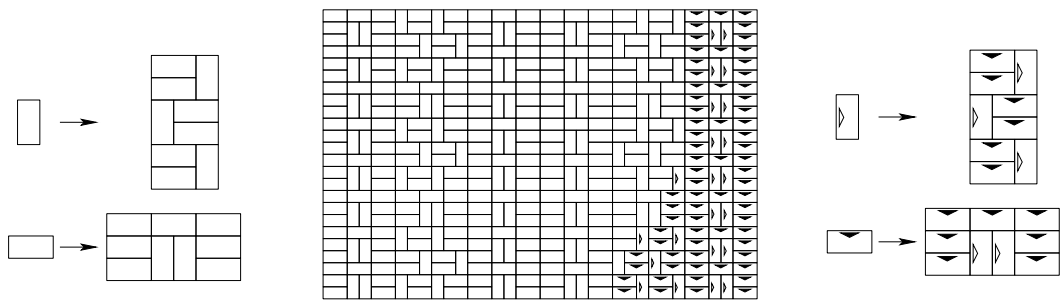, width=140mm}
\end{center}
\caption{\label{imbal} A substitution tiling where horizontal rectangles
  are more frequent in the tiling than vertical ones. By Theorem
  \ref{finor}, this can be achieved only when the two types of
  rectangles are substituted differently from each other.}
\end{figure}

In the next section this theorem is used to show that any such tiling
has circular diffraction spectrum. Now we show an analogue of the
theorem above for tilings with tiles in finitely many orientations. 
One can ask whether there are substitution tilings where tiles occur
more frequent in one direction than the other. This is possible, see
Figure \ref{imbal} for an example. But this is achieved by
substituting the horizontal rectangle differently from the vertical
rectangle. In fact we have two prototiles in this example, each one
occurring in one direction only. This is illustrated in the right part
of Figure \ref{imbal} by assigning different decorations to the
tiles. The following theorem states that 
each prototile $P_i$ occurs in each of its orientations with the same
frequency in a primitive substitution tiling. In order to make this
precise, let $\freq(P_i,\alpha)$ denote the frequency of tiles
$R_{\alpha} P_i +t$ in a tiling $\T$. That is, 
\begin{equation}\label{freq}
 \freq(P_i, \alpha, \T) = \lim_{r \to \infty} \frac{\# \{T \in \T \cap
  B_r \; : \; \exists t \; \; T = R_{\alpha} P_i + t \} }{ \# \{ T \in
  \T \cap   B_r \} },
\end{equation}
where $B_r$ denotes the closed ball of radius $r$ around the origin.
For primitive substitution tilings, this frequency exists and is
independent of the choice of the origin. This can be seen as follows: 
Up to now, we identified types of tiles with respect to congruence
congruence classes. In a tiling with finitely many orientations, one
can as well identify types of tiles with respect to translation classes.  
Usually, this results in a larger number of prototiles (e.g., the
Penrose dart and kite tilings have two prototiles up to congruence,
but 20 prototiles up to translation). The frequency of each prototile
in a primitive substitution tiling can be obtained from the
substitution matrix $M_{\sigma}$. This result is folklore, see for
instance \cite{fogg}, \cite{f1}: The normed Perron-Frobenius
eigenvector of $M_{\sigma}$ contains the relative frequencies of the
prototiles. In particular, in all tilings in $\X_{\sigma}$, these
frequencies are the same. Therefore, for any
given primitive substitution, we can drop the $\T$ in the definition
of the frequency in this case and write just $\freq(P_i, \alpha)$. 

\begin{thm} \label{finor}
Let $\sigma$ be a primitive substitution with prototiles $\{P_1,
\ldots, P_m \}$. If $\freq(P_i,\alpha) \ne 0$, $\freq(P_i,\beta) \ne
0$ then $\freq(P_i,\alpha) = \freq(P_i,\beta)$. 
\end{thm}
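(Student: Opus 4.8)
The plan is to pass from prototiles-up-to-congruence to prototiles-up-to-translation, apply the Perron--Frobenius theorem, and exploit a rotational symmetry of the resulting substitution matrix. First I would dispose of the degenerate cases: if $\freq(P_i,\alpha)=0$ for all but at most one value of $\alpha$ there is nothing to prove, and we may assume that $P_i$, hence (by primitivity together with $\sigma(R_\alpha P_j+t)=R_\alpha\sigma(P_j)+\lambda t$) every prototile, occurs in only finitely many orientations in the tiling $\T$ under consideration — if some prototile occurred in infinitely many orientations the frequency of any single orientation would vanish, so the hypothesis could not hold. Write $\Theta_i\subset[0,2\pi[$ for the finite set of orientations in which $P_i$ occurs in $\T$, and work with the $\sum_i|\Theta_i|$ translation classes $(P_i,\alpha)$, $\alpha\in\Theta_i$. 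The substitution $\sigma$ induces a substitution $\tilde\sigma$ on these classes, with a nonnegative integer substitution matrix $\tilde M$ whose entry $\tilde M_{(j,\beta),(i,\alpha)}$ equals the number of copies of $P_j$ inside $\sigma(P_i)$ having relative orientation $\beta-\alpha$. By the folklore result quoted above, applied to $\tilde\sigma$ (which is again primitive, see below), the numbers $\freq(P_i,\alpha)$ are exactly the entries of the normalised Perron--Frobenius eigenvector $\mathbf f$ of $\tilde M$.

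The heart of the proof is a structural fact about the sets $\Theta_i$. Since $\sigma(R_\alpha P_i+t)=R_\alpha\sigma(P_i)+\lambda t$ is a subpatch of $\sigma(\T)$, and $\sigma(\T)$ is locally isomorphic to $\T$ and so has the same set of occurring orientations, one obtains $\alpha+G_{ij}\subset\Theta_j$ for every $\alpha\in\Theta_i$, where $G_{ij}\subset[0,2\pi[$ denotes the set of relative orientations in which $P_j$ occurs inside $\sigma(P_i)$. Replacing $\sigma$ by a suitable power we may assume $M_\sigma$ is strictly positive, so every $G_{ij}$ is nonempty; then $|\Theta_i|\le|\Theta_j|$ for all $i,j$, hence all the $|\Theta_i|$ coincide, say with $N$, and from $\Theta_i+g\subset\Theta_j$ between finite sets of equal size we get $\Theta_i+g=\Theta_j$ for every $g\in G_{ij}$. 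In particular the stabiliser $S=\{\theta:\Theta_i+\theta=\Theta_i\}$ is the same for all $i$; being a finite subgroup of $SO(2)$ it is cyclic, $S=\langle R_{2\pi/N}\rangle$. Finally $\Theta_i$ is a single coset of $S$: by repetitivity any two orientations of $P_i$ occurring in $\T$ already occur together inside one supertile $\sigma^k(P_l)$, the relative orientations of the $P_i$-tiles inside $\sigma^k(P_l)$ all lie in a single $S$-coset (again by the cardinality argument applied to $\sigma^k$), so those two orientations differ by an element of $S$; since $\Theta_i$ is $S$-invariant it is exactly one coset, and $|\Theta_i|=|S|=N$. The same bookkeeping shows that all translation classes are mutually accessible under $\tilde\sigma$, i.e. $\tilde\sigma$ is primitive.

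The conclusion is then immediate. The rotation $\rho=R_{2\pi/N}\in S$ sends each translation class $(P_i,\alpha)$ to $(P_i,\alpha+\tfrac{2\pi}{N})$, which by the previous paragraph is again one of our classes, so $\rho$ induces a permutation matrix $\Pi$ on the index set. Since $\tilde M$ depends only on orientation differences, $\Pi$ commutes with $\tilde M$; hence $\Pi\mathbf f$ is again a nonnegative Perron--Frobenius eigenvector of $\tilde M$, and by uniqueness $\Pi\mathbf f=\mathbf f$. Thus $\mathbf f$ is constant on each $\Pi$-orbit; but each $\Theta_i$, being a single $S$-coset, is a single $\Pi$-orbit, so $\freq(P_i,\alpha)$ takes the same value for all $\alpha\in\Theta_i$. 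As $\freq(P_i,\gamma)\ne0$ exactly when $\gamma\in\Theta_i$, this is precisely the assertion.

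The step I expect to be the main obstacle is the structural lemma of the second paragraph: excluding the possibility that $\Theta_i$ splits into several cosets of its stabiliser (which would make $\tilde M$ reducible and break the uniqueness used above) and verifying that $\tilde\sigma$ is primitive. Both rest on repetitivity and minimality of primitive substitution tilings — that any two tiles occurring in $\T$ already occur together inside some large supertile — combined with a careful analysis of how relative orientations add up under iterated substitution; it is the rigid identity $\sigma(R_\alpha P_i+t)=R_\alpha\sigma(P_i)+\lambda t$ that transports orientation information between distinct prototiles and forbids the incompatible configurations. Everything after that is routine Perron--Frobenius theory.
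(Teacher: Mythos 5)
Your strategy (pass to translation classes, build an induced matrix $\tilde M$, and combine Perron--Frobenius uniqueness with a commuting rotation permutation) is genuinely different from the paper's proof, which simply rotates the whole tiling: $R_{\alpha_{i,j}}\T$ again lies in $\X_\sigma$, all tilings in $\X_\sigma$ share the same translation-class frequencies, and hence $\freq(P_i,0)=\freq(P_i,\alpha_{i,j})$. However, your route has a genuine gap at its foundation: the claim that $\sigma(\T)$ ``has the same set of occurring orientations'' as $\T$, and hence that $\alpha\in\Theta_i$, $g\in G_{ij}$ forces $\alpha+g\in\Theta_j$. The local isomorphism available under Definition 1.1 is only up to congruence (the hull $\X_\sigma$ is rotation-saturated --- the paper's own proof exploits exactly this), and congruence-LI does not preserve orientation sets. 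The claim is in fact false: let $P$ be the right isosceles triangle with legs $1$, $\lambda=\sqrt2$, and $\sigma(P)=\{R_{3\pi/4}P+t_1,\ R_{5\pi/4}P+t_2\}$, the rotation-only halving of $\sqrt2\,P$. This is a primitive selfsimilar substitution ($M_\sigma=(2)$); every tiling in $\X_\sigma$ shows exactly four orientations forming a single coset $c+\langle\pi/2\rangle$, each of frequency $1/4$ (so the theorem is not vacuous here), yet the children's relative orientations $3\pi/4,5\pi/4$ lie outside $\langle\pi/2\rangle$, so $(\Theta+G)\cap\Theta=\emptyset$ and $\sigma(\T)$ and $\T$ have disjoint orientation sets. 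Consequently your matrix $\tilde M$ indexed by the occurring classes is the zero matrix, $\tilde\sigma$ is not a substitution on those classes at all, and primitivity, the identification of the frequencies with the Perron--Frobenius eigenvector, and the commuting permutation $\Pi$ all collapse.

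The defect is repairable, but it needs an idea your write-up does not contain: the occurring orientation sets are cosets which $\sigma$ shifts by a fixed element of a finite cyclic rotation group, so one must first replace $\sigma$ by a power $\sigma^n$ for which this shift is trivial for all prototiles --- and this is not the power that makes $M_\sigma$ positive (in the example above $M_\sigma$ is already positive while the shift by $3\pi/4$ is not trivial). Only after that is the set of occurring translation classes $\tilde\sigma$-invariant, and one must still verify that $\T$ is a substitution tiling for this translation-level substitution (patches inside translated, not merely congruent, copies of supertiles) and that the translation-level matrix is primitive, which, as the same example shows, can fail for $\sigma$ itself even though $\sigma$ is primitive up to congruence; this is also the point where your appeal to the folklore eigenvector result needs justification. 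So the ``structural lemma'' of your second paragraph is indeed the main obstacle, and the justification you give for it (local isomorphism of $\sigma(\T)$ and $\T$ preserving orientations) does not hold; the remaining Perron--Frobenius and symmetry arguments are fine once that lemma is correctly established for a suitable power of $\sigma$.
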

\begin{proof}
If $\freq(P_i,\alpha) \ne 0$,
then the prototiles occur in finitely many orientations only, by
primitivity of the substitution and Theorem \ref{unifdistr}. Therefore
we obtain finitely many prototiles with respect to translations: $(P_1,0), 
(P_1, \alpha_{1,1}),  (P_1, \alpha_{1,2}),  \ldots, (P_1, \alpha_{1,k});
(P_2,0), (P_2, \alpha_{2,1}), \ldots \ldots (P_m, \alpha_{m, \ell} )$. 

Consider a tiling $\T \in \X_{\sigma}$. By the remarks preceding the
theorem, $\freq(P_i,0)$ is well defined, and its value is the
same in the tiling $\T$ and in the tiling $R_{\alpha_{i,j}} \T \in
\X_{\sigma}$. And trivially, $\freq(P_i, 0)$ in $\T$ equals
$\freq(P_i, \alpha_{i,j} )$ in $R_{\alpha_{i,j}} \T$. Therefore,
$\freq(P_i, 0 ) = \freq(P_i, \alpha_{i,j} )$, and the claim follows.  
\end{proof}

\begin{figure}
\begin{center}
\epsfig{file=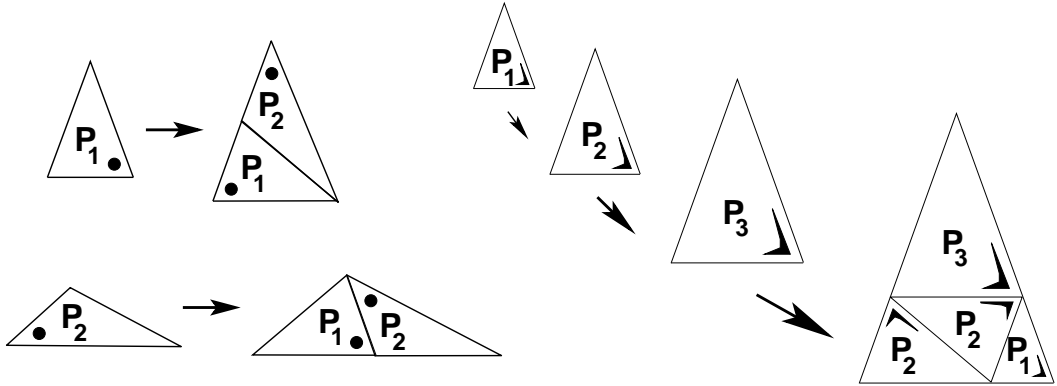, width=140mm}
\end{center}
\caption{\label{substbsp} Two substitutions yielding tiling with
  statistical circular symmetry. The left one is due to Danzer and
  Goodman-Strauss \cite{cgs}, the right one is from \cite{f}.}
\end{figure}

\section{Diffraction}

There is a wealth of literature devoted to the mathematical
description of the diffraction spectrum of nonperiodic structures, see
for instance \cite{bl}, \cite{hof}, \cite{l}, \cite{schl}, or the
introductory text \cite{baa} and references therein. Usually one
starts with a discrete point set $\Lambda \subset \R^2$ rather than a
tiling. In the present context of tilings, this can be achieved by
decorating each prototile with one or more points ('atomic
decorations'). 

Let $\Lambda$ be such a point set, obtained from a tiling. 
The diffraction spectrum of $\Lambda$ is the Fourier transform of the
autocorrelation of $\Lambda$.
The autocorrelation (also called Patterson function) of $\Lambda$ is 
\begin{equation} \label{autocor}
 \gamma = \gamma_{\Lambda} =
  \lim_{r \to \infty} \frac{1}{\vol(B_r)} \sum_{ x,y \in
  \Lambda \cap B_r} \delta_{x-y}, 
\end{equation}
if this limit exists. Here and in the following we are working in the
framework of tempered distributions \cite{rud}, \cite{schw}. Thus the 
limit above is to be understood as a vague limit. If the limit does
not exist, there is a subsequence converging to
an autocorrelation which is known to be translation bounded (since we
started with a translation bounded measure $\delta_{\Lambda}$) and
positive definite. Thus $\gamma$ can be regarded as a translation 
bounded positive definite measure. Now, the Fourier transform
$\widehat{\gamma}$ of $\gamma$ is the {\em diffraction measure}, and
its support is the diffraction spectrum of $\Lambda$. We are
interested in the nature of $\widehat{\gamma}$. By Lebesgue's
decomposition theorem, $\widehat{\gamma}$ has a unique decomposition
\[ \widehat{\gamma} = \widehat{\gamma}_{pp} + \widehat{\gamma}_{sc} +
\widehat{\gamma}_{ac} \] 
with respect to the Lebesgue measure in $\R^2$. The pure point part 
$\widehat{\gamma}_{pp}$ is a countable sum of Dirac measures. The 
absolutely continuous part $\widehat{\gamma}_{ac}$ is a measure with
locally integrable density function supported on a set of positive
Lebesgue measure. The singular continuous $\widehat{\gamma}_{sc}$ part
assigns zero to all finite sets, but is supported on a set with
Lebesgue measure zero. The (idealized) diffraction spectrum of a
quasicrystal is pure point, the singular parts vanish completely. 
 
The following results show that substitution tilings with statistical
circular symmetry have a diffraction measure which is of (perfect)
circular symmetry. Consequently, the pure point part of the
diffraction is trivial. 

\begin{thm} \label{thmautocor}
Let $\T = \{ R_{\alpha_1} P_{i_1} + t_1, R_{\alpha_2} P_{i_2} + t_2,
\ldots \}$ be a primitive substitution tiling of statistical circular
symmetry, with prototiles $P_1, \ldots, P_m$. Choose 'control points'
$x_i \in P_i$, and define $\Lambda = \{ R_{\alpha_1} x_{i_1} + t_1,
R_{\alpha_2} x_{i_2} + t_2, \ldots \}$. Then the autocorrelation
$\gamma_{\Lambda}$ of $\Lambda$ is circular symmetric.
\end{thm}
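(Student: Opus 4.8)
The plan is to exploit the statistical circular symmetry to show that $\gamma_\Lambda$ is invariant under every rotation $R_\theta$, $\theta \in [0,2\pi[$. Since $\gamma_\Lambda$ is already a (positive definite, translation bounded) measure, invariance under all rotations about the origin forces it to be a radially symmetric measure, which is exactly the assertion. So the real content is: \emph{for every $\theta$, $R_\theta \gamma_\Lambda = \gamma_\Lambda$.}

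First I would rewrite the autocorrelation in a way that separates the combinatorial type of a pair of tiles from the geometry. Group the sum in \eqref{autocor} according to which prototiles $P_i$, $P_j$ the points $x,y$ come from and according to the orientations $\alpha = \phi(T)$, $\beta = \phi(T')$ of the two tiles carrying them. For a tile $T = R_\alpha P_i + t$ the control point is $R_\alpha x_i + t$, so the difference of two control points depends on $\alpha,\beta$ and on the relative position $t - t'$ of the two tiles. The key observation is that the set of relative positions occurring between a tile of type $(P_i,\alpha)$ and a tile of type $(P_j,\beta)$ is, up to a global rotation, the same data as for the rotated pair $(P_i,\alpha+\theta)$, $(P_j,\beta+\theta)$ — because rotating the whole tiling by $\theta$ is a symmetry of the hull $\X_\sigma$ (the substitution $\sigma$ commutes with rotations, so $R_\theta\T \in \X_\sigma$), and the pair-correlation data of a primitive substitution tiling is the same for all tilings in $\X_\sigma$. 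Thus the contribution to $\gamma_\Lambda$ of pairs with orientations $(\alpha,\beta)$ equals the $R_\theta$-image of the contribution of pairs with orientations $(\alpha-\theta,\beta-\theta)$, weighted by the relative frequency of such pairs.

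Next I would bring in statistical circular symmetry to control those weights. In the finitely-many-orientations case (Theorem \ref{finor}) each prototile occurs in each of its finitely many orientations with equal frequency, and more: the two-tile frequencies only depend on the orientation difference $\beta-\alpha$, again because $R_\theta$ maps the hull to itself. Hence summing over all occurring orientation pairs, the total is manifestly invariant under $\alpha\mapsto\alpha+\theta$, $\beta\mapsto\beta+\theta$ for $\theta$ in the relevant finite rotation group, and one checks the orbit structure closes up so that $R_\theta\gamma_\Lambda=\gamma_\Lambda$ for those $\theta$; combined with the equidistribution this already gives full rotation invariance when the symmetry group is the full circle. In the genuinely infinite-orientations case, statistical circular symmetry says the orientations $\phi(T_j)$ are equidistributed in $[0,2\pi[$, and one needs the stronger statement that \emph{pairs} of orientations, with a fixed combinatorial relation, are equidistributed on the relevant torus; this follows by applying Theorem \ref{unifdistr} inside supertiles, since within each supertile $\sigma^k(P_i)$ the relative orientation structure is fixed and the supertiles themselves appear in equidistributed orientations. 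Passing to the $r\to\infty$ limit and using that the number of tiles in $B_r$ grows like $\vol(B_r)$ times a constant, the average over the pair of tiles becomes an integral over $\theta$ against the uniform measure, which is rotation invariant.

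The main obstacle I anticipate is the interchange of limits: one must show that the limit defining $\gamma_\Lambda$ in \eqref{autocor} can be decomposed into the per-orientation-class contributions and that the equidistribution (a Cesàro statement about the ordering from the preceding definition) is strong enough to control the average over $B_r$ uniformly, so that the vague limit of the averaged sum is the integral of the rotated building blocks. Concretely, one needs a dominated-convergence-type argument for tempered distributions: the finitely many ``shape contributions'' $\gamma^{(i,j)}_{\alpha,\beta}$ are each translation bounded, their rotates sweep out a continuous family, and the empirical pair-orientation measure converges vaguely to uniform; testing against a fixed Schwartz function reduces everything to a scalar equidistribution statement, which is where Theorems \ref{unifdistr} and \ref{finor} do the work. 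Once that is set up, rotation invariance of $\gamma_\Lambda$ — hence its circular symmetry — is immediate.
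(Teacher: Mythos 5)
Your proposal follows essentially the same route as the paper: reduce the difference vectors entering $\gamma_\Lambda$ to configurations lying inside $k$-th order supertiles and use the equidistribution of supertile orientations (via Theorem \ref{unifdistr}) to get rotation invariance of the autocorrelation, hence circular symmetry. The only points worth noting are that the paper makes explicit the step you leave implicit --- for difference vectors of a fixed length, the pairs straddling two distinct $k$-th order supertiles become negligible as $k \to \infty$ --- and that your finitely-many-orientations digression is vacuous under the hypothesis, since statistical circular symmetry forces infinitely many, equidistributed orientations.
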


The subtlety of this theorem is that the autocorrelation is not of
statistical circular symmetry only (this is easy to see) but of
perfect circular symmetry.  The rotated measure $R.\mu$ is defined
by $R.\mu(A) = \mu(R^{-1} A)$ for all measurable sets $A$. Then $\mu$
is of circular symmetry if $R. \mu = \mu$ for all $R$.   

\begin{proof}
Since $\T$ is of statistical circular symmetry, $\T$ contains tiles
$T_j$ congruent to some prototile $P_i$ for (countably) infinitely many 
distinct angles $\alpha_j = \phi(T_j)$. 
By equidistribution, the frequency of tiles in one certain orientation
is 0. But, again by the equidistribution of $(\alpha_j)_{j \ge 0}$,
the frequency of tiles in a certain range of orientations --- say,  
$[ \beta, \gamma [ \subseteq [0, 2 \pi[$ --- equals the frequency of
tiles in orientations $[ \beta+c , \gamma+c [ \mod 2 \pi$ for all $c$.
The same is obviously true for supertiles in $\T$. 

The autocorrelation is defined via difference vectors $x-y$. 
We deduce the circular symmetry of the autocorrelation by assigning
these vectors $x-y$ to supertiles in $\T$. Each such
vector arises from a constellation of two tiles (usually not adjacent). 
Consider only vectors $x-y$ of fixed a length $r>0$. Let $k \ge 1$. 
Then either $x$ and $y$ are contained in the same $k$-th order
supertile, or in two different $k$-th order supertiles. With growing
$k$, the ratio of the latter kind to the former kind tends to zero.
Thus it suffices to regard the former kind only. Each one arises from
a $k$-th order supertile, these are equidistributed, hence
equidistribution of orientations of difference vectors $x-y$ of radius
$r$ follows. In other words, the frequency of difference vectors $x-y$
of length $r$ with orientation $\varphi \in [ \beta, \gamma [
\subseteq [0, 2 \pi[$ equals the frequency of difference vectors of
length $r$ in orientations $[ \beta+c , \gamma+c [ \mod 2 \pi$ for all
$c$. It follows that  $\gamma_{\Lambda} = R.\gamma_{\Lambda}$ for all
rotations $R$.  
\end{proof}

Now we can apply the following standard result, which can be found for
instance in \cite{bf}. 

\begin{lemma} \label{taurlemma}
Let $R$ be an orthogonal map, $\mu$ a measure, and let the measure 
$R.\mu$ be given by $R. \mu (A) = \mu(R^{-1}A)$. Then
$  R.\widehat{\mu} = \widehat{R. \mu}\, $. 
\end{lemma}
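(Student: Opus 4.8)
The plan is to verify the identity by testing both sides against Schwartz functions and unravelling the definitions, the Fourier transform of a tempered distribution being defined by duality. Recall that for a tempered distribution $\mu$ (in particular for a translation bounded measure, which is the case of interest here) one has $\langle \widehat{\mu}, \varphi \rangle = \langle \mu, \widehat{\varphi} \rangle$ for every $\varphi$ in Schwartz space $\mathcal{S}(\R^2)$, and that the pushforward $R.\mu$ acts by $\langle R.\mu, \varphi\rangle = \langle \mu, \varphi \circ R\rangle$. The latter is just the change-of-variables formula $\int \varphi\, d(R.\mu) = \int (\varphi\circ R)\, d\mu$; since $R$ is orthogonal its Jacobian determinant equals $1$, so no correction factor appears, and the same estimate shows that $R.\mu$ is again a tempered distribution, so that $\widehat{R.\mu}$ is well defined.

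First I would record the elementary fact for genuine Schwartz functions, namely $\widehat{\varphi \circ R} = \widehat{\varphi} \circ R$ for orthogonal $R$. This follows from a single substitution $y = Rx$ in the defining integral of $\widehat{\varphi\circ R}$, using the identity $\langle R^{-1}y, \xi\rangle = \langle y, R\xi\rangle$, which holds precisely because $R$ is orthogonal ($R^{-1} = R^{T}$); the Jacobian is again trivial. (With a Fourier convention carrying a factor $2\pi$ nothing changes, since that constant factors out of the substitution.)

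Then I would simply chain the definitions: for every $\varphi \in \mathcal{S}(\R^2)$,
\[
\langle \widehat{R.\mu}, \varphi\rangle = \langle R.\mu, \widehat{\varphi}\rangle = \langle \mu, \widehat{\varphi}\circ R\rangle = \langle \mu, \widehat{\varphi\circ R}\rangle = \langle \widehat{\mu}, \varphi \circ R\rangle = \langle R.\widehat{\mu}, \varphi\rangle .
\]
Since a tempered distribution is determined by its action on Schwartz functions, this yields $\widehat{R.\mu} = R.\widehat{\mu}$, as claimed.

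There is essentially no real obstacle here; the only points needing a little care are the bookkeeping of conventions — which of $R$, $R^{-1}$, $R^{T}$ appears where — and the remark that $R.\mu$ remains tempered (resp. translation bounded and positive definite) so that the statement even makes sense. If one prefers to argue directly at the level of measures rather than tempered distributions, the identical computation goes through with $\widehat{\mu}$ read as the vague limit of the Fourier transforms of finite truncations of $\mu$, the orthogonal change of variables commuting with that limit.
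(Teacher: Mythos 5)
Your argument is correct: the duality computation $\langle \widehat{R.\mu}, \varphi\rangle = \langle \mu, \widehat{\varphi}\circ R\rangle = \langle \mu, \widehat{\varphi\circ R}\rangle = \langle R.\widehat{\mu}, \varphi\rangle$ is exactly the standard proof, with orthogonality of $R$ entering precisely where you say (unit Jacobian and $\langle R^{-1}y,\xi\rangle = \langle y,R\xi\rangle$). The paper itself gives no proof --- it cites the lemma as a standard result from the reference \cite{bf} --- and your write-up is essentially the argument one finds there, so nothing is missing.
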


Since $R. \gamma_{\Lambda} = \gamma_{\Lambda}$, with $\Lambda$ as in
\ref{thmautocor}, it follows $R. \widehat{\gamma_{\Lambda}} =
  \widehat{R. \gamma_{\Lambda}} =  \widehat{\gamma_{\Lambda}}$ for all
rotations $R$. Altogether we have obtained the following result.

\begin{thm} 
Let $\Lambda$ be as in Theorem \ref{thmautocor} arising from a tiling
with statistical circular symmetry. Then the diffraction of $\Lambda$
is circular symmetric. \hspace{\fill} $\Box$
\end{thm}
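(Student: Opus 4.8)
The plan is to read the result off from Theorem~\ref{thmautocor} and Lemma~\ref{taurlemma} in combination. First I would recall that Theorem~\ref{thmautocor} gives $R.\gamma_{\Lambda} = \gamma_{\Lambda}$ for every rotation $R$ about the origin, where $\gamma_{\Lambda}$ is regarded as a translation bounded positive definite measure in the sense of tempered distributions; by the Bochner--Schwartz theorem its Fourier transform $\widehat{\gamma_{\Lambda}}$ then exists as a translation bounded positive measure. Should the limit in~\eqref{autocor} fail to exist, one passes to a vaguely convergent subsequence; the constellation-counting argument in the proof of Theorem~\ref{thmautocor} produces the same rotation invariance for every cluster point, so the reasoning below applies verbatim to each of them.

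Next I would apply the Fourier transform to both sides of $R.\gamma_{\Lambda}=\gamma_{\Lambda}$ and use Lemma~\ref{taurlemma} to commute the rotation past the transform:
\[ \widehat{\gamma_{\Lambda}} \;=\; \widehat{R.\gamma_{\Lambda}} \;=\; R.\widehat{\gamma_{\Lambda}} . \]
As $R$ ranges over all rotations, this says precisely that the diffraction measure $\widehat{\gamma_{\Lambda}}$ is invariant under the full rotation group, i.e.\ of perfect circular symmetry, which is the assertion.

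As a concluding remark I would spell out the promised consequence: a nonzero countable sum of Dirac measures cannot be invariant under all rotations unless it is concentrated at the origin, for otherwise it would place infinitely many atoms of equal mass on some circle, contradicting translation boundedness. Hence $\widehat{\gamma}_{pp}$ reduces to at most a point mass at $0$, and the Bragg peaks characteristic of a genuine quasicrystal are absent; the remaining mass of $\widehat{\gamma_{\Lambda}}$ sits in the (rotation invariant) continuous parts $\widehat{\gamma}_{sc}+\widehat{\gamma}_{ac}$.

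I do not anticipate a genuine obstacle in this final step: it is essentially a one-line deduction once Theorem~\ref{thmautocor} is available. The only care needed is technical, namely to confirm that Lemma~\ref{taurlemma}, stated for an orthogonal map and a general measure, does apply to the translation bounded positive definite measure $\gamma_{\Lambda}$ in the tempered-distribution framework, and that the passage to a convergent subsequence of approximating autocorrelations (when the limit in~\eqref{autocor} is not a priori known to exist) does not destroy the rotation invariance coming from statistical circular symmetry.
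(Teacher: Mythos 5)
Your proposal is correct and coincides with the paper's own argument: Theorem~\ref{thmautocor} gives $R.\gamma_{\Lambda}=\gamma_{\Lambda}$, and Lemma~\ref{taurlemma} commutes the rotation with the Fourier transform to yield $R.\widehat{\gamma_{\Lambda}}=\widehat{R.\gamma_{\Lambda}}=\widehat{\gamma_{\Lambda}}$. Your added remarks on subsequences of the autocorrelation and on the pure point part being at most a peak at the origin match the paper's surrounding discussion as well.
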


An immediate consequence is that the pure point part of the
diffraction spectrum of a tiling with statistical circular symmetry is
at most one peak at the origin. A simple argument shows that this peak
always exists, compare \cite{hof}, \cite{mps}. These results are
relevant for physics because such tilings can serve as a simple model 
for powder diffraction \cite{bfg}.

\section{Tiling dynamical systems} \label{dynsys}

An important tool in investigating properties of nonperiodic tilings
is the fact that such a tiling gives rise to a dynamical system,
compare \cite{radw},\cite{sol},\cite{schl},\cite{bl}, \cite{rob}. From
the properties of the dynamical system one can deduce properties of the
tiling. For brevity, we describe the metrical dynamical system
corresponding to
\cite{rad}. In the context of this paper, the topology defined by
this metric is the same as the more general local rubber topology
in \cite{bl}, as well as the topology in \cite{radw} and the local
topology in \cite{schl}. Let $d$ be the following metric, measuring
whether two tilings are 'close' to each other. 
\[ d(\T,\T') =  \min \big\{ \frac{1}{\sqrt{2}} , \inf_{\varepsilon >0}  \{
\varepsilon \, : \, B_{1/\varepsilon} \cap (\T + s) =
B_{1/\varepsilon} \cap (R_{\alpha} \T' + t), \, \|s\|, \|t\| \le
\frac{\varepsilon}{2}, \, |\alpha| \le \varepsilon \} \big\}  \] 
If $d(\T,\T')$ is small, then $\T$ and $\T'$ agree on a large ball
around the origin, after a small rotation followed by a small
translation. Now, the {\em hull}\, $\X_{\T}$ of the tiling $\T$ is the
closure of $\{ \T + t \, : \, t \in \R^2 \}$ with respect to $d$. It
is a well-known fact that the hull of a primitive substitution tiling
with substitution $\sigma$  is the family $\X_{\sigma}$ of all
substitution tilings (compare Def.\ \ref{defsubst}). Therefore, we
denote the  hull of $\T$ by $\X_{\sigma}$, if it is clear which
substitution $\sigma$ belongs to $\T$.  

A substitution $\sigma$ gives rise to a dynamical system
$(\X_{\sigma}, G)$, where either $G = \R^2$ (all translations in
the plane), or $G = E(2)$ (all direct Euclidean motions, i.e., all
maps $x \mapsto R x +t$, $R$ a rotation, $t$ a translation vector). 
The study of such dynamical systems yields deep insight into the
diffraction properties of the tilings in $\X_{\sigma}$. A milestone in
the mathematical diffraction theory is the result that the diffraction
spectrum is pure point if and only if the dynamical spectrum is pure
point \cite{hof}, \cite{sol}, \cite{schl}, \cite{lms}, \cite{bl}.  
Other important results are collected in the following theorem. 
It summarises contributions of several people to the subject over
the last two decades. For brevity, it is formulated for the special
case of plane tilings only. 

\begin{thm}
Let $\sigma$ be a primitive substitution. 
\begin{enumerate}
\item $\X_{\sigma}$ has {\em  uniform cluster frequencies}, that is, each
  patch occurs with a well defined frequency throughout the tilings in
  $\X_{\sigma}$. Here, frequency is defined analogously to \eqref{freq}.   
\item If the tiles in tilings in $\X_{\sigma}$ show finitely many
  orientations, then $(\X_{\sigma}, \R^2)$ is uniquely ergodic.
\item If the tiles in tilings in $\X_{\sigma}$ show infinitely
  many orientations, then $(\X_{\sigma}, G)$ is uniquely ergodic, where 
  $G \in \{ \R^2, E(2) \}$. 
\item All tilings in $\X_{\sigma}$ have the same autocorrelation, thus
  the same diffraction spectrum.
\end{enumerate}
\end{thm}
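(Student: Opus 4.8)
The plan is to obtain (2), (3) and (4) as consequences of (1) together with the results already established (notably Theorem \ref{unifdistr}), so the real content lies in (1), which I treat first. Fix a patch (cluster) $\mathcal{C}$. For each order $k$ and each prototile $P_i$ let $N_k(\mathcal{C},i)$ be the number of copies of $\mathcal{C}$ contained in the supertile $\sigma^k(P_i)$, and let $c_k(i)$ be the total number of tiles in $\sigma^k(P_i)$, i.e.\ the $i$-th column sum of $M_\sigma^k$. Since $\sigma$ is primitive, $M_\sigma$ has a simple positive eigenvalue (equal to the area scaling factor $\lambda^2$) with strictly positive left and right eigenvectors $u,v$, and $\lambda^{-2k} M_\sigma^k \to v u^{T}$ geometrically fast; this yields $c_k(i) \sim \lambda^{2k}\,(\textstyle\sum_j u_j)\,v_i$ and, by a routine recursion on $k$ for $N_k(\mathcal{C},i)$, that $N_k(\mathcal{C},i)/c_k(i)$ tends as $k \to \infty$ to a limit $\freq(\mathcal{C})$ which is the same for every $i$. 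Now let $\T \in \X_\sigma$. By Definition \ref{defsubst}, up to a set of tiles meeting a bounded neighbourhood of $\partial B_r$, the patch $\T \cap B_r$ is a disjoint union of supertiles of a fixed large order $k$; a supertile of order $k$ has inradius of order $\lambda^k$ while it meets $\partial B_r$ in a family of tiles of relatively vanishing cardinality, so the frequency of $\mathcal{C}$ in $\T$ exists, equals $\freq(\mathcal{C})$, and the convergence is uniform both in the center of the ball and over $\T \in \X_\sigma$. Hence $\X_\sigma$ has uniform cluster frequencies.

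For (2): unique ergodicity of $(\X_\sigma,\R^2)$ is equivalent to uniform (in $\T$) convergence of the averages $\frac{1}{\vol(B_r)}\int_{B_r} f(\T-t)\,dt$ for every $f \in C(\X_\sigma)$. Cylinder functions --- those depending only on the patch of $\T$ in a fixed ball, up to translation --- are dense in $C(\X_\sigma)$ by the definition of the metric $d$, and for such a function the average above is, modulo a boundary term that vanishes in the limit, a finite linear combination of cluster frequencies, hence converges uniformly by (1); an $\varepsilon/3$ argument extends this to all of $C(\X_\sigma)$ and produces a unique invariant probability measure $\mu_\sigma$. For (3), when the tilings in $\X_\sigma$ carry infinitely many orientations, $\X_\sigma$ is invariant under all of $E(2)$, and one must exhibit a unique $G$-invariant probability measure for $G=\R^2$ and for $G=E(2)$. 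The translational part is handled as in (2); for the rotational part, Theorem \ref{unifdistr} says the orientations of tiles, hence of supertiles, are equidistributed, so the image of any invariant measure under $\phi$ is normalized Haar measure on $[0,2\pi[$. Combining this with the uniform convergence from (1) --- now averaging cylinder functions over a product of a large ball with all rotations --- forces every invariant measure to agree with $\mu_\sigma$ on cylinders, hence everywhere; the same $\mu_\sigma$ serves for both choices of $G$.

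For (4): fix $\T \in \X_\sigma$ with associated point set $\Lambda$ and test \eqref{autocor} against a compactly supported continuous $g$. This rewrites $\frac{1}{\vol(B_r)}\sum_{x,y \in \Lambda \cap B_r} g(x-y)$ as an average over $B_r$ of a bounded cylinder-type function on $\X_\sigma$, namely the one whose value at $\T-t$ records the $g$-weighted constellation of points of $\Lambda-t$ near the origin; by the unique ergodicity from (2) or (3) this average converges, and to a value independent of $\T$. Hence $\gamma_\Lambda$ exists for every $\T \in \X_\sigma$ and is one and the same measure, and so is its Fourier transform $\widehat{\gamma_\Lambda}$. The step I expect to be the main obstacle is the uniformity in (1): upgrading ``cluster frequencies exist'' to ``they exist uniformly over the hull'' is exactly what needs the quantitative Perron--Frobenius estimate on $M_\sigma^k$ together with the geometric control of supertile boundaries; once this is available, parts (2)--(4) are essentially formal. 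For the individual ingredients I would cite \cite{sol}, \cite{schl}, \cite{lms}, \cite{bl}, and \cite{rad} for the rotational case.
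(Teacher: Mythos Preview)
Your treatment of parts (1) and (2) is essentially what the paper has in mind, though the paper gives no proof at all: the paragraph following the theorem merely assembles citations --- (1) is referred to Definition~\ref{defsubst} and \cite{sol}, (2) to \cite{sol} and \cite{lms}, and (4) is said to follow directly from (1) together with the definition of the autocorrelation (rather than via unique ergodicity, as in your argument).

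The real discrepancy is in part (3). For $G=E(2)$ the paper simply cites \cite{rad}. For $G=\R^2$, however, the paper explicitly flags this case as \emph{not} established: ``it can hopefully be shown via Theorem~\ref{unifdistr}, adapting methods from \cite{lms}. We aim to give a precise and detailled proof in the future.'' Your sketch glosses over exactly the obstruction behind this caveat. In the infinite-orientation setting the patch frequencies of (1) --- frequencies of \emph{translation} classes of patches --- are all zero, as the paper itself remarks. Hence the mechanism you invoke from (2), reducing ergodic averages of cylinder functions to finite linear combinations of patch frequencies, collapses: every such combination vanishes and gives no constraint on an $\R^2$-invariant measure. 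Your phrase ``the translational part is handled as in (2)'' is precisely where the argument breaks. The subsequent step, averaging over ``a product of a large ball with all rotations'', bears on $E(2)$-invariance but not on $\R^2$-invariance, since an $\R^2$-invariant measure carries no a~priori rotational symmetry one could exploit. So your outline does not close the gap the paper leaves open; and because your route to (4) passes through (3), that deduction is likewise incomplete in the infinite-orientation case.
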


The first part is a simple consequence of Def.\ \ref{defsubst}, compare
the discussion in Section \ref{sec2}, or \cite{sol} for 'self-affine'
tilings. (Each $\X_{\sigma}$ as above contains a self-affine tiling,
and by primitivity, all tilings in $\X_{\sigma}$ have the same cluster
frequencies.) However, for tilings with statistical circular symmetry,
these frequencies are zero. But see \cite{f} for a similar uniformity
result in this case. A detailled proof of 
the second part can be found in \cite{sol} or \cite{lms}. For tilings
with tiles in infinitely many orientations, part three is stated in
\cite{rad} for $G = E(2)$. For $G = \R^2$, it can hopefully be shown
via Theorem \ref{unifdistr}, adapting methods from \cite{lms}. We aim
to give a precise and detailled proof in the future. The last part of
the theorem is an immediate consequence from part one and the
definition of the autocorrelation.

\section*{Acknowledgements}

It is a pleasure to thank Christoph Richard for many helpful
comments. This work was supported by the German Research Council
(DFG), within the CRC 701.

\end{document}